\newtheorem{propo}{Proposition}[section]
\newtheorem{corol}[propo]{Corollary}
\newtheorem{lemma}[propo]{Lemma}
\newtheorem{conje}[propo]{Conjecture}
\newtheorem{algor}[propo]{Algorithm}
\theoremstyle{definition}
\newtheorem{defin}[propo]{Definition}
\newtheorem{examp}[propo]{Example}
\theoremstyle{remark}
\newtheorem{remar}[propo]{Remark}
\numberwithin{equation}{section}
\newcommand{\algo}[6]
{\vspace{11pt}\addtocounter{propo}{1}
\begin{algor}{{\bf #1}{\rm (#2)}}\label{#1}\end{algor}
\vspace{-6pt}\noindent{\it #3}.\\
{\bf Input:} #4\\
{\bf Output:} #5\\
\newcounter{#1}
\begin{list}{\textbf{\arabic{#1}.}}{\usecounter{#1}}
#6\end{list}\vspace{3pt}}
\newcommand{\NN }{\mathbb{N}}
\newcommand{\RR }{\mathbb{R}}
\newcommand{\ZZ }{\mathbb{Z}}
\DeclareMathOperator{\SL}{SL}
\newcommand{\Fc }{\mathcal F}
\newcommand{\QB }{Q}
\newcommand\generic{specialized\xspace}
\newcommand\Generic{Specialized\xspace}
\newcommand\kfrieze{$\SL_k$-frieze\xspace}
\newcommand\kfriezes{$\SL_k$-friezes\xspace}
\title[On wild frieze patterns]
{On wild frieze patterns}
\author{M.~Cuntz}
\address{Michael Cuntz,
Institut f\"ur Algebra, Zahlentheorie und Diskrete Mathematik,
Fakult\"at f\"ur Mathematik und Physik,
Leibniz Universit\"at Hannover,
Welfengarten 1,
D-30167 Hannover, Germany}
\email{cuntz@math.uni-hannover.de}
\begin{document}

\begin{abstract}
In this note, among other things, we show:
\begin{enumerate}
\item There are periodic wild $\SL_k$-frieze patterns whose entries are positive integers.
\item There are non-periodic $\SL_k$-frieze patterns whose entries are positive integers.
\item There is an $\SL_3$-frieze pattern whose entries are positive integers and with infinitely many different entries.
\end{enumerate}
\end{abstract}

\maketitle

Conway-Coxeter frieze patterns are arrays of positive integers bounded by diagonals of ones such that every adjacent $2\times 2$ subdeterminant is one. They were introduced in \cite{hsmC71} and classified for the first time in \cite{jChC73}. More than $30$ years later, they appeared in a natural way in several other contexts, for instance in the fields of cluster algebras and of Nichols algebras.

Among the generalizations of frieze patterns that have been introduced since their first appearance (see \cite{MG15}) are the $\SL_k$-frieze patterns which we will discuss in more detail in this note.
In the literature, $\SL_k$-frieze patterns satisfy an $\SL_k$-condition (adjacent $k\times k$ subdeterminants are $1$), usually they consist of integers, and sometimes the entries are required to be positive. For example, the array
%
\[
\begin{array}{cccccccccccc}
1 & 0 & 0 & 1 & 13 & 88 & 314 & 25 & 4 & 1 & 0 & 0\\
2 & 1 & 0 & 0 & 1 & 7 & 25 & 2 & 1 & 2 & 1 & 0\\
138 & 72 & 1 & 0 & 0 & 1 & 4 & 1 & 49 & 138 & 72 & 1\\
389 & 203 & 3 & 1 & 0 & 0 & 1 & 2 & 138 & 389 & 203 & 3\\
203 & 106 & 2 & 3 & 1 & 0 & 0 & 1 & 72 & 203 & 106 & 2\\
3 & 2 & 3 & 17 & 7 & 1 & 0 & 0 & 1 & 3 & 2 & 3\\
1 & 3 & 17 & 97 & 40 & 6 & 1 & 0 & 0 & 1 & 3 & 17\\
0 & 1 & 7 & 40 & 17 & 6 & 13 & 1 & 0 & 0 & 1 & 7\\
0 & 0 & 1 & 6 & 6 & 25 & 88 & 7 & 1 & 0 & 0 & 1
\end{array}
\]
is an $\SL_3$-frieze pattern. In the case of tame friezes, i.e.\ when all adjacent $(k+1)\times(k+1)$-subdeterminants are zero, these three conditions appear to be very natural; on the one hand they imply that the entries of the frieze are specializations of variables of a cluster algebra, on the other hand, positivity has a nice geometric interpretation. Integrality could turn out to be natural because of some yet unknown representation theoretical motivation (as for example given by the Nichols algebras in the case of Conway-Coxeter friezes, see \cite{p-CH09d}, \cite{p-C14}).

However, in the wild (not tame) case, the examples presented in this note suggest that the usual conditions on the entries do not seem to be the natural ones. In fact, the determinants of the adjacent $(k-1)\times(k-1)$ submatrices in an $\SL_k$-frieze pattern will probably play a more important role than the entries of the frieze themselves.
And we believe that to assume that these determinants are nonzero (the frieze is then tame) is more natural than to assume positivity of the entries.

We exhibit some facts on tame friezes in the second section.
In the last section we provide many examples of wild $\SL_k$-friezes illustrating that it is probably hopeless to classify all wild friezes (even when the entries are positive integers), although
to my knowledge, no wild friezes with positive integral entries were known before.

\medskip
\noindent{\bf Acknowledgement:}
{I would like to thank C.\ Bessenrodt, T.\ Holm, P.\ J\o rgensen, and S.\ Morier-Genoud for calling my attention to some of the questions addressed here and for many further valuable comments.}

\section{$\SL_k$-frieze patterns}

$\SL_k$-frieze patterns have been considered for the first time in \cite{CR72} with a slightly more special definition.
Meanwhile, many papers have treated different types of $\SL_k$-patterns.
Let us shortly recapitulate some definitions and results.

\begin{defin} Let $k\in\NN$, $k\ge 2$.
An \emph{$\SL_k$-frieze pattern} (or \emph{\kfrieze}for short) is an array $\Fc$ of the form
\[
\begin{array}{ccccccccccccc}
 & & & & \ddots & & &  & & & & & \\
0 & \cdots & 0 & 1 & c_{i-1,1} & \cdots & c_{i-1,n} & 1 & 0 & \cdots & 0 & & \\
& 0 & \cdots & 0 & 1 & c_{i,1} & \cdots & c_{i,n} & 1 & 0 & \cdots & 0 & \\
& & 0 & \cdots & 0 & 1 & c_{i+1,1} & \cdots & c_{i+1,n} & 1 & 0 & \cdots & 0 \\
 & & & & & & & & \ddots & & & & 
\end{array}
\]
where there are $k-1$ zeros on the left and right in each row, $c_{i,j}$ are numbers in some field, and such that every
(complete) adjacent $k\times k$ submatrix has determinant $1$.
It may be convenient to extend the pattern to the right and to the left by repeating the rows; in this case one has to multiply every second repetition by $\varepsilon:=(-1)^{k-1}$:
\[
\cdots \:\:\: 0 \:\:\: 1 \:\:\: c_{i-1,1} \:\:\: \cdots \:\:\: c_{i-1,n} \:\:\: 1 \:\:\: \underbrace{0 \:\:\: \cdots \:\:\: 0}_{k-1} \:\:\:
\varepsilon \:\:\: \varepsilon c_{i-1,1} \:\:\: \cdots \:\:\: \varepsilon c_{i-1,n} \:\:\: \varepsilon \:\:\: 0 \:\:\: \cdots
\]

We call $n$ the \emph{height} of $\Fc$.
We call the frieze $\Fc$
\begin{itemize}
\item \emph{integral} if all $c_{i,j}$ are in $\ZZ$,
\item \emph{non-zero} if $c_{i,j}\ne 0$ for all $i,j$,
\item \emph{positive} if all $c_{i,j}$ are positive real numbers,
\item \emph{periodic} if there exists an $m>0$ such that $c_{i,j}=c_{i+m,j}$ for all $i$ and $1\le j\le n$,
\item \emph{\generic}if any adjacent $(k-1)\times(k-1)$ submatrix not containing too many zeros from the border has nonzero determinant,
\item \emph{tame} if all adjacent $(k+1)\times(k+1)$ submatrices have determinant $0$,
\item \emph{wild} if it is not tame.
\end{itemize}
Sometimes it will be useful to view the extended $\Fc$ as a matrix $\Fc = (a_{i,j})_{i,j\in\ZZ}$, where $a_{1,1}=c_{1,1}$ and thus $a_{i,j+i-1}=c_{i,j}$.
\end{defin}

\begin{examp}
\begin{enumerate}
\item Conway-Coxeter friezes are exactly the integral positive $\SL_2$-friezes. They are all tame, \generic, and periodic.
\item
Extending the array
\[
\begin{array}{ccccccccccc}
1 & 0 & 0 & 1 & 3 & 8 & 4 & 7 & 1 & 0 & 0\\
1 & 1 & 0 & 0 & 1 & 3 & 2 & 4 & 1 & 1 & 0\\
4 & 7 & 1 & 0 & 0 & 1 & 3 & 8 & 4 & 7 & 1\\
2 & 4 & 1 & 1 & 0 & 0 & 1 & 3 & 2 & 4 & 1\\
3 & 8 & 4 & 7 & 1 & 0 & 0 & 1 & 3 & 8 & 4\\
1 & 3 & 2 & 4 & 1 & 1 & 0 & 0 & 1 & 3 & 2\\
0 & 1 & 3 & 8 & 4 & 7 & 1 & 0 & 0 & 1 & 3\\
0 & 0 & 1 & 3 & 2 & 4 & 1 & 1 & 0 & 0 & 1
\end{array}
\]
periodically (in all directions) we obtain a tame integral positive $\SL_3$-frieze.
\end{enumerate}
\end{examp}

\section{Tame frieze patterns}

\subsection{$\xi$-sequences}
It has been proved several times in the literature that tame friezes are periodic, see for example \cite{MGOST14}.

The main reason is (compare \cite{BR10}): Let $\Fc=(a_{i,j})$ be a tame \kfrieze
and denote
\[ F_{i,j}^\ell := \begin{pmatrix}
a_{i,j} & \cdots & a_{i,j+\ell-1} \\
\vdots &  & \vdots \\
a_{i+\ell-1,j} & \cdots & a_{i+\ell-1,j+\ell-1}
\end{pmatrix}. \]
Then the matrices
$B_j:=F_{i,j}^{-1} F_{i,j+1}$
are all equal for fixed $j$ and $i\in\ZZ$, and they are of the form
\[
\xi(c_1,\ldots,c_{k-1}) = \begin{pmatrix}
0 & \cdots & \cdots &\cdots & 0 & (-1)^{k+1} \\
1 & 0 & \cdots &\cdots & 0 & (-1)^{k} c_1 \\
0 & 1 & 0 & \cdots & 0 & \vdots \\
\vdots & & \ddots & \ddots & \vdots & \vdots \\
\vdots & & & \ddots & 0 & -c_{k-2} \\
0 & \cdots & & \cdots & 1 & c_{k-1}
\end{pmatrix},
\]
where the last column has alternating signs and $c_1,\ldots,c_{k-1}\in\RR_{\ge 0}$ if the frieze is positive.

Since $\Fc$ is tame, it is periodic with period $n+k+1$. Thus the sequence of $B_j$ is periodic, and
$B_1\cdots B_{n+k+1}=(-1)^{k-1} I$. Further, the complete frieze pattern is uniquely determined by this sequence of $B_j$, thus by a sequence of $n+k+1$ tuples of the form $(c_1,\ldots,c_{k-1})$.

An interesting question concerning tame friezes is:

\begin{conje}
There are only finitely many tame integral positive $\SL_3$-friezes of height $4$.
More precisely, the number of such friezes could be $26952$.
\end{conje}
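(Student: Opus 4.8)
The plan is to pass from friezes to their monodromy data. By the discussion preceding the statement, a tame $\SL_3$-frieze of height $4$ is nothing but a sequence $B_1,\dots,B_8$ of matrices $B_j=\xi(c_1^{(j)},c_2^{(j)})$ with $B_1\cdots B_8=(-1)^{k-1}I=I$, the frieze being reconstructed from the order-$3$ recurrence $v_{j+3}=c_2^{(j)}v_{j+2}-c_1^{(j)}v_{j+1}+v_j$ on the diagonal vectors $v_j\in\ZZ^3$ (the columns of the window matrices $F^3_{i,j}$). First I would make this dictionary exact: positivity of $\Fc$ forces $c_i^{(j)}\ge 0$; integrality forces $c_i^{(j)}\in\ZZ$, since each $F^3_{i,j}$ is unimodular and hence $B_j$ is an integer matrix; and the $1/0$ border pins several $v_j$ to standard basis vectors. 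One checks that, modulo the translation (periodicity) and reflection symmetries of the pattern, a positive integral frieze is equivalent to such a closed positive integer $\xi$-sequence.

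Since each $B_j$ lies in $\SL_3$, the single relation $B_1\cdots B_8=I$ cuts out a positive-dimensional real variety in the parameters, so finiteness cannot come from dimension alone; it must come from positivity and integrality together, exactly as in the $\SL_2$ case, where positive integral friezes of height $n$ correspond to triangulations of an $(n+3)$-gon and the monodromy forces the quiddity sum $\sum_j c_j=3(n+1)$, leaving finitely many compositions. Accordingly the heart of the argument is an analogous a priori bound. The cleanest route is the geometric model: interpret the $v_j$ as lattice points whose consecutive triples satisfy $\det(v_j,v_{j+1},v_{j+2})=1$ and all of whose frieze minors $\det(v_a,v_b,v_c)$ (the entries of $\Fc$) are positive, so that the projectivised points lie in convex position on a closed lattice curve with unimodular consecutive triples. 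Configurations of a fixed number of such points are finite up to $\SL_3(\ZZ)$, and from convexity together with unimodularity I would extract an explicit bound on the $c_i^{(j)}$: if some $c_2^{(j)}$ were large the recurrence would grow geometrically and the diagonal vectors could not close up into a positive period-$8$ loop.

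With an explicit bound at hand the count reduces to a finite search: enumerate all nonnegative integer $\xi$-sequences below the bound with $B_1\cdots B_8=I$, discard those yielding a non-positive or non-integral frieze, and quotient by the dihedral symmetry group (translation is removed by fixing one period; the glide reflection must be accounted for). An exhaustive computation would then confirm the value $26952$. I expect the finiteness step, not the enumeration, to be the main obstacle: turning the informal statement ``positivity forbids geometric growth'' into a rigorous, checkable bound is delicate, because $\xi(c_1,c_2)$ carries the negative entry $-c_1$, so the partial products are not manifestly positive and no naive Perron--Frobenius estimate applies; one must instead exploit the total positivity of the minors of the point configuration. A secondary subtlety, which decides whether the exact number is literally $26952$, is the precise convention for identifying friezes related by reflection, and this must be fixed before the count can be certified.
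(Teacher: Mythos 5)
The statement you are trying to prove is a \emph{conjecture}: the paper offers no proof of it at all. The surrounding text only records what is known (finitely many tame integral positive $\SL_2$-friezes; finitely many tame integral positive $\SL_3$-friezes of small height; infinitely many of height greater than $4$, citing Morier-Genoud) and floats $26952$ as a possible count. So there is no paper proof to match your proposal against; the question is whether your sketch actually closes the problem, and it does not. Your dictionary is fine and agrees with the paper's Section 2: a tame $\SL_3$-frieze of height $4$ is encoded by a closed $\xi$-sequence $B_1\cdots B_{n+k+1}=B_1\cdots B_8=(-1)^{k-1}I=I$ with nonnegative integer parameters. The gap is exactly the step you flag as the ``heart of the argument,'' the a priori bound on the $c_i^{(j)}$, and the two assertions you offer to obtain it are both false as stated. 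First, positivity of the frieze does not place the diagonal vectors in ``convex position with all minors positive'': the entries of a tame $\SL_3$-frieze realize only the minors $\det(v_j,v_{j+1},v_m)$ in which two indices are consecutive, so positivity of $\Fc$ gives no control over arbitrary triples $\det(v_a,v_b,v_c)$, and the total-positivity input your convexity argument needs is simply not available. Second, and decisively, the proposed mechanism ``positivity plus closing up into a loop forces bounded parameters'' would apply verbatim at height $5$ (period $9$ instead of $8$), where the paper records that \emph{infinitely many} tame integral positive $\SL_3$-friezes exist; the same families refute your claim that configurations of a fixed number of such lattice points are finite up to $\SL_3(\ZZ)$. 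Hence any correct finiteness proof must exploit an obstruction specific to height $4$, and your sketch contains no candidate for one.

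A smaller but real inaccuracy: in the $\SL_2$ comparison you invoke, the quiddity-sum identity $\sum_j c_j = 3(n+1)$ (more precisely $3n-3$ for an $n$-gon, depending on convention) has no known $\SL_3$ analogue, so the analogy does not transfer; this is presumably why the statement is a conjecture rather than a theorem. The enumerative tail of your plan (search below the bound, quotient by the dihedral symmetries, confirm $26952$) is unobjectionable but vacuous without the bound, and in any case would only ever certify the count computationally --- which is consistent with, but does not go beyond, what the paper itself leaves open.
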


Notice that there are only finitely many tame integral positive $\SL_2$-friezes (see for example \cite{jChC73}), that there are only finitely many tame integral positive $\SL_3$-friezes of height less than $4$ ($5$, $51$, $868$ of heights $2,3,4$ resp.), but that there are infinitely many tame integral positive $\SL_3$-friezes of height greater than $4$ (see \cite{MG12}).
Notice also that the examples in Section \ref{wfp} show that there are infinitely many wild integral positive $\SL_k$-friezes of a fixed height.

\subsection{\Generic friezes}
The (apparently) most useful identity in the context of \kfriezes is Sylvester's identity:
If we denote $D_{i,j}^\ell$ the determinant of the $\ell\times\ell$ adjacent submatrix at position $(i,j)$ in the array $(a_{i,j})_{i,j\in\ZZ}$ (i.e.\ $D_{i,j}^\ell=\det F_{i,j}^\ell$), then
\begin{equation}\label{impeq}
D_{i,j}^{\ell+1} D_{i+1,j+1}^{\ell-1} = D_{i,j}^\ell D_{i+1,j+1}^\ell-D_{i+1,j}^\ell D_{i,j+1}^\ell,
\end{equation}
for $\ell\ge 1$ (assume $D_{i,j}^0=1$).
\begin{corol}\label{cornerpos}
If $D_{i,j}^\ell,D_{i+1,j}^\ell,D_{i,j+1}^\ell,D_{i,j}^{\ell+1},D_{i+1,j+1}^{\ell-1}$ are positive, then $D_{i+1,j+1}^\ell$ is positive as well.
\end{corol}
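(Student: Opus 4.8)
The plan is to read off the result directly from Sylvester's identity \eqref{impeq}, which is the only tool needed here. The strategy is to solve that identity for the single determinant $D_{i+1,j+1}^\ell$ and then inspect the sign of each factor that appears.

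First I would take \eqref{impeq} in the form
\[
D_{i,j}^\ell\, D_{i+1,j+1}^\ell = D_{i,j}^{\ell+1}\, D_{i+1,j+1}^{\ell-1} + D_{i+1,j}^\ell\, D_{i,j+1}^\ell,
\]
obtained simply by moving the mixed term $D_{i+1,j}^\ell D_{i,j+1}^\ell$ to the left-hand side and swapping the two sides. Since $D_{i,j}^\ell$ is assumed positive, in particular it is nonzero, so I may divide by it to isolate the quantity of interest:
\[
D_{i+1,j+1}^\ell = \frac{D_{i,j}^{\ell+1}\, D_{i+1,j+1}^{\ell-1} + D_{i+1,j}^\ell\, D_{i,j+1}^\ell}{D_{i,j}^\ell}.
\]

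Now I would simply check signs. By hypothesis the five determinants $D_{i,j}^{\ell+1}$, $D_{i+1,j+1}^{\ell-1}$, $D_{i+1,j}^\ell$, $D_{i,j+1}^\ell$, and $D_{i,j}^\ell$ are all positive. Hence each of the two products in the numerator is positive, their sum is positive, and the denominator is positive; therefore the quotient $D_{i+1,j+1}^\ell$ is positive, which is exactly the claim.

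There is essentially no obstacle to overcome: the entire content is the rearrangement of \eqref{impeq} together with the elementary fact that a sum of products of positive numbers, divided by a positive number, is positive. The only point requiring the hypothesis in its full strength (rather than, say, nonnegativity) is that division by $D_{i,j}^\ell$ is legitimate, which is guaranteed by $D_{i,j}^\ell>0$. This corollary is thus best viewed as a corner-propagation principle: once a suitable $L$-shaped configuration of adjacent determinants is known to be positive, positivity is forced at the remaining corner, and one expects it to be applied inductively to spread positivity of the $(k-1)\times(k-1)$ minors across a \generic frieze.
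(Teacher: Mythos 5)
Your proof is correct and is exactly the argument the paper intends: the corollary is stated as an immediate consequence of Sylvester's identity \eqref{impeq}, and your rearrangement $D_{i+1,j+1}^\ell = \bigl(D_{i,j}^{\ell+1} D_{i+1,j+1}^{\ell-1} + D_{i+1,j}^\ell D_{i,j+1}^\ell\bigr)/D_{i,j}^\ell$ with a sign check is the (implicit) one-line proof, including the observation that positivity of $D_{i,j}^\ell$ licenses the division.
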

\begin{corol}
\Generic \kfriezes are tame by
Equation (\ref{impeq}) for $\ell=k$:
\begin{equation*}
D_{i,j}^{k+1} D_{i+1,j+1}^{k-1} = D_{i,j}^k D_{i+1,j+1}^k-D_{i+1,j}^k D_{i,j+1}^k = 1-1 = 0.
\end{equation*}
\end{corol}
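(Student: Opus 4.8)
The plan is to extract tameness directly from Sylvester's identity, letting the defining $\SL_k$-condition collapse its right-hand side and the \generic hypothesis remove the superfluous factor on the left. Recall that the frieze is tame precisely when $D_{i,j}^{k+1}=0$ for every adjacent $(k+1)\times(k+1)$ submatrix, whereas being \generic guarantees $D_{i,j}^{k-1}\neq 0$ for every adjacent $(k-1)\times(k-1)$ submatrix that keeps clear of the triangular blocks of zeros bordering the band.

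First I would fix a position $(i,j)$ carrying an adjacent $(k+1)\times(k+1)$ window and specialize Equation~(\ref{impeq}) to $\ell=k$:
\[
D_{i,j}^{k+1}\,D_{i+1,j+1}^{k-1}=D_{i,j}^k\,D_{i+1,j+1}^k-D_{i+1,j}^k\,D_{i,j+1}^k.
\]
Each of the four determinants on the right is that of a complete adjacent $k\times k$ submatrix, hence equals $1$ by the defining condition, so the right-hand side is $1-1=0$ and therefore $D_{i,j}^{k+1}\,D_{i+1,j+1}^{k-1}=0$. The central $(k-1)\times(k-1)$ block $F_{i+1,j+1}^{k-1}$ sits one step inside the window on all four sides, so it lies in the interior of the band and its determinant is nonzero by the \generic hypothesis; since the entries live in a field, cancelling this factor forces $D_{i,j}^{k+1}=0$. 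As $(i,j)$ ranges over all positions, every adjacent $(k+1)\times(k+1)$ determinant vanishes and the frieze is tame.

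The one point that needs care is the matching of the two border qualifiers in the last step: I must confirm that the inner $(k-1)\times(k-1)$ block of \emph{every} relevant $(k+1)\times(k+1)$ window really is among the submatrices to which the \generic hypothesis applies, and separately dispose of the windows pressed against the diagonals of ones, whose inner block is excluded from that hypothesis. For the latter the forced pattern of zeros along the border should already make $D_{i,j}^{k+1}$ vanish, so the two notions dovetail. This boundary bookkeeping, rather than the one-line interior computation, is the only genuine obstacle.
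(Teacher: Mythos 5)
Your proposal is correct and takes exactly the paper's route: the paper's entire proof \emph{is} the displayed Sylvester identity (\ref{impeq}) at $\ell=k$, whose right-hand side collapses to $1-1=0$ by the $\SL_k$-condition, after which the \generic hypothesis allows cancelling the nonzero factor $D_{i+1,j+1}^{k-1}$ to get $D_{i,j}^{k+1}=0$. The boundary bookkeeping you flag in your last paragraph (windows whose inner $(k-1)\times(k-1)$ block is excluded from the \generic hypothesis by the ``too many zeros'' qualifier) is not addressed in the paper at all, so your attempt is, if anything, more scrupulous than the source.
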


If $\Fc$ is a \generic \kfrieze, then it is uniquely determined by any sequence of $k-1$ successive rows.
\begin{examp}
If we choose $c_{i,j}=a_{i,j+i-1}$, $1\le i < k$, $1\le j \le n$ to be transcendent and algebraically independent, then $\Fc$ is \generic.
\end{examp}
\begin{defin}
Replacing every entry $a_{i,j}$ in $\Fc$ by $D_{i,j}^{k-1}$ we obtain the \emph{dual} frieze $\hat\Fc$.
\end{defin}
If $\Fc$ is tame, then $\hat\Fc$ is a transposed, translated copy of $\Fc$ (see \cite{CR72} or \cite{MGOST14}).
Thus:
\begin{corol}
A tame frieze is \generic if and only if it is nonzero.
\end{corol}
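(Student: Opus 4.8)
The plan is to deduce the corollary directly from the duality statement just quoted, namely that for tame $\Fc$ the dual $\hat\Fc$ is a transposed, translated copy of $\Fc$. The starting observation is purely definitional: the interior entries of $\hat\Fc$ are, by construction, exactly the determinants $D_{i,j}^{k-1}$ of the adjacent $(k-1)\times(k-1)$ submatrices of $\Fc$, and these are precisely the submatrices quantified in the definition of a \generic frieze. Hence
\[
\Fc \text{ is \generic} \iff \text{every interior entry of }\hat\Fc\text{ is nonzero} \iff \hat\Fc \text{ is non-zero}.
\]

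To make this precise I would first pin down how the duality treats the border. Passing to $D_{i,j}^{k-1}$ turns the bounding diagonals of ones and the surrounding zeros of $\Fc$ into the corresponding bounding ones and zeros of $\hat\Fc$, so that the positions carrying genuine entries of $\hat\Fc$ are exactly those $(i,j)$ for which the $(k-1)\times(k-1)$ submatrix does \emph{not} contain too many zeros from the border. This is where the informal clause in the definition of \generic is used: near the border a submatrix acquires several zero columns and its determinant vanishes for trivial reasons, so such positions are excluded both from the \generic condition and from the interior of $\hat\Fc$, and I would check that the two exclusions coincide.

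Finally I would invoke the tameness hypothesis through the quoted duality: since $\hat\Fc$ is a transposed and translated copy of $\Fc$, the underlying affine bijection of positions carries interior positions to interior positions and alters each entry at most by a sign, hence preserves vanishing and non-vanishing. Therefore $\hat\Fc$ is non-zero if and only if $\Fc$ is non-zero, and combining this with the displayed equivalence yields ``\generic $\iff$ non-zero'' for tame friezes. The only real work is the bookkeeping of the first paragraph — matching the interior of $\hat\Fc$ to the submatrices appearing in the definition of \generic, and verifying that the transposition--translation of the last step respects this interior; once the border behaviour of the duality is fixed, no computation remains.
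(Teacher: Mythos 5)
Your proposal is correct and follows essentially the same route as the paper, which deduces the corollary immediately (``Thus:'') from the preceding duality statement that for a tame frieze $\hat\Fc$ is a transposed, translated copy of $\Fc$, combined with the definitional fact that the entries of $\hat\Fc$ are exactly the adjacent $(k-1)\times(k-1)$ determinants quantified in the definition of a \generic frieze. You merely make explicit the border bookkeeping and the sign/translation invariance of non-vanishing that the paper leaves implicit.
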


But is there a tame frieze which is not \generic? Yes:

\begin{examp}
Extending the array
\[
\begin{array}{cccccccccc}
1 & 0 & 0 & 1 & 0 & 0 & 1 & 1 & 0 & 0\\
1 & 1 & 0 & 0 & 1 & 1 & 1 & 1 & 1 & 0\\
1 & 1 & 1 & 0 & 0 & 1 & 2 & 1 & 1 & 1\\
1 & 0 & 0 & 1 & 0 & 0 & 1 & 1 & 0 & 0\\
1 & 0 & -1 & 1 & 1 & 0 & 0 & 1 & 0 & -1\\
0 & 1 & 0 & -1 & 1 & 1 & 0 & 0 & 1 & 0\\
0 & 0 & 1 & 0 & -1 & 0 & 1 & 0 & 0 & 1
\end{array}
\]
periodically (in all directions) we obtain a tame but not \generic integral $\SL_3$-frieze.
\end{examp}

The following lemma (together with the fact that tame friezes are periodic) generalizes the main result of \cite{CR72}.

\begin{lemma}
If $\Fc$ is an \kfrieze such that
\begin{equation}\label{lemass}
D_{1,j}^\ell>0 \quad \text{ for all } \ell=1,\ldots,k-1 \text{ and } j=1,\ldots,n,
\end{equation}
then $\Fc$ is \generic (and thus tame and periodic).
\end{lemma}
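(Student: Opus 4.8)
The plan is to show that every adjacent $(k-1)\times(k-1)$ minor $D_{i,j}^{k-1}$, and more generally $D_{i,j}^\ell$ for $1\le\ell\le k-1$, that does not reach too far into the triangular regions of border zeros is in fact strictly positive. Being nonzero, this is exactly the definition of \generic, and then the two corollaries already established (\generic friezes are tame, tame friezes are periodic) finish the statement for free. The only ingredients are Sylvester's identity (\ref{impeq}), its consequence Corollary \ref{cornerpos}, the normalizations $D_{i,j}^0=1$ and $D_{i,j}^k=1$, and the positivity hypothesis (\ref{lemass}) on the top row.

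First I would pin down the base cases coming from the two diagonals of ones. Using the indexing $a_{i,j+i-1}=c_{i,j}$, the left diagonal of ones sits on $a_{i,i-1}=1$, with the $k-1$ border zeros lying strictly below and to the left of it. Hence the submatrix $F_{i,i-1}^\ell$ has its main diagonal exactly on the ones-diagonal and only zeros below that diagonal, so it is triangular with $1$'s on the diagonal and $D_{i,i-1}^\ell=1>0$ for every $i$ and every $\ell\le k$. The symmetric computation at the right diagonal of ones gives the value $1$ for the rightmost admissible position in each row. These border minors are the seeds of the induction, and they sit precisely at the edge of the range where "not too many border zeros" is satisfied.

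The core is a double induction transporting positivity away from the top row. Assume row $i$ is fully understood, i.e.\ $D_{i,j}^\ell>0$ for all admissible $j$ and all $0\le\ell\le k$ (for $i=1$ this is hypothesis (\ref{lemass}) for $1\le\ell\le k-1$, and automatic for $\ell\in\{0,k\}$). To fill in row $i+1$ I sweep $\ell$ from $1$ up to $k-1$ and, for each fixed $\ell$, the columns from left to right, starting from the left-border seed. Corollary \ref{cornerpos} produces $D_{i+1,j+1}^\ell$ out of $D_{i,j}^\ell,D_{i+1,j}^\ell,D_{i,j+1}^\ell,D_{i,j}^{\ell+1},D_{i+1,j+1}^{\ell-1}$, and with this order each of the five is already known positive: the three row-$i$ entries and the higher-level entry $D_{i,j}^{\ell+1}$ (covered by the hypothesis on row $i$, or equal to $1$ when $\ell+1=k$) come from the previous row; $D_{i+1,j}^\ell$ is the previous column of the current sweep; and $D_{i+1,j+1}^{\ell-1}$ is the previous level, with $D^0=1$ when $\ell=1$. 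This yields positivity for all $i\ge 1$. The mirror-image argument propagates upward: solving (\ref{impeq}) for the up-left minor as $D_{i,j}^\ell=(D_{i,j}^{\ell+1}D_{i+1,j+1}^{\ell-1}+D_{i+1,j}^\ell D_{i,j+1}^\ell)/D_{i+1,j+1}^\ell$, and sweeping $\ell$ from $k-1$ down to $1$ with columns from right to left out of the right-border seed, gives positivity for all $i\le 1$. Hence every admissible $D_{i,j}^\ell$ is positive, so $\Fc$ is \generic, whence tame and periodic.

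I expect the genuine difficulty to be organizational rather than analytic: one must fix precisely which triples $(i,j,\ell)$ count as admissible (so as to exclude the forbidden zero-rich corners), check that the left- and right-border seeds lie exactly on the boundary of that range, and verify that in both the downward and the upward sweep the chosen order of $\ell$ and $j$ really does make all five inputs of Sylvester's identity available and positive at the instant they are used. Once this bookkeeping is in place, each individual step is a single application of Corollary \ref{cornerpos} or of its up-left rearrangement, with no further computation.
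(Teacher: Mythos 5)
Your argument is correct, and its downward half is essentially the paper's proof: positivity is transported away from the hypothesis row by Corollary \ref{cornerpos}, seeded by the triangular border minors $D_{i,i-1}^\ell=1$ (the paper's ``$D_{2,1}^{\ell-1}=1>0$ because it is on the border''), with a double induction over $j$ and $\ell$; that you nest the loops the other way (outer $\ell$ ascending, inner $j$ left to right, rather than the paper's outer $j$ with inner $\ell$) is immaterial, since both schedules make the five inputs of Sylvester's identity available when needed. Where you genuinely diverge is the propagation to rows $i\le 0$. The paper gets this for free by a symmetry observation: once $D_{i,i+j-1}^\ell>0$ is known for all $i\ge 1$, the point-reflected array $(a_{k+1-i,k+1-j})_{i,j}$ is again an \kfrieze satisfying hypothesis (\ref{lemass}), so the same downward argument applied to it covers all $i\in\ZZ$. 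You instead run an explicit upward sweep, solving (\ref{impeq}) for the up-left minor,
\[
D_{i,j}^\ell=\frac{D_{i,j}^{\ell+1}D_{i+1,j+1}^{\ell-1}+D_{i+1,j}^\ell D_{i,j+1}^\ell}{D_{i+1,j+1}^\ell},
\]
with $\ell$ descending from $k-1$ and columns right to left out of the right-border seed $D_{i,i+n}^\ell=1$ (which you correctly identify as a lower-triangular minor with ones on the diagonal). This is valid: the divisor $D_{i+1,j+1}^\ell$ is positive by the induction on the row below, the numerator is a sum of products of already-established positive quantities, and your scheduling does make every input available at the moment it is used (including $D^k=1$ at level $k$ and $D^0=1$ at level $0$). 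The trade-off: your route is self-contained and fully explicit but requires verifying a second, mirrored bookkeeping and a division step, whereas the paper's rotation trick reuses the downward induction verbatim at the cost of the (easy but unstated) check that the $\SL_k$- and positivity hypotheses transfer to the reflected frieze. Both yield specializedness, and the final reduction to tame (Sylvester at $\ell=k$) and periodic (periodicity of tame friezes) is the same in both treatments.
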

\begin{proof}
It suffices to prove (see (i) and (ii) below) that (\ref{lemass}) holds for the next sequence of $k-1$ rows of the pattern:
Induction then gives $D_{i,i+j-1}^\ell>0$ for $\ell=1,\ldots,k-1$, $j=1,\ldots,n$, and $i\ge 1$.
But then (\ref{lemass}) also holds for the frieze given by $(a_{k+1-i,k+1-j})_{i,j}$. The same argument thus shows that
$D_{i,i+j-1}^\ell>0$ for $\ell=1,\ldots,k-1$, $j=1,\ldots,n$, and all $i\in\ZZ$.
In particular, the case $\ell=k-1$ implies that $\Fc$ is \generic.

(i) Let $\ell=2$. Then $D_{1,1}^{\ell-1}$, $D_{1,2}^{\ell-1}$, and $D_{1,1}^{\ell}$ are positive by (\ref{lemass}) (where $D_{1,1}^{\ell}=1$ if $\ell=k$), $D_{2,2}^{\ell-2}=1$, and $D_{2,1}^{\ell-1}=1>0$ because it is on the border of the frieze.
Thus by Corollary \ref{cornerpos} for $i=1$, $j=1$, $\ell=2$, we get $D_{2,2}^{1}>0$.
Induction over $\ell$ yields $D_{2,2}^{\ell-1}>0$ for $3\le\ell\le k$ as well (notice that we always have $D_{2,1}^{\ell-1}=1>0$).

(ii) But then we can move to the next entries to the right, $D_{2,j}^{\ell-1}$, $j\ge 3$: As before,
$D_{1,j-1}^{\ell-1}$, $D_{1,j}^{\ell-1}$, and $D_{1,j-1}^{\ell}$ are positive by (\ref{lemass}),
and $D_{2,j-1}^{\ell-1}>0$ by the induction in (i).
Hence by an induction over $j$ and for each $j$ an induction over $\ell$ (like in (i)) we obtain $D_{2,j}^{\ell-1}>0$ for all $j=2,\ldots,n+1$ and $2\le\ell\le k$.
\end{proof}

\section{Wild frieze patterns}\label{wfp}

Are there wild \kfriezes? Yes:

\begin{examp}
Extending the array
\[
\begin{array}{cccccccc}
1 & 1 & 1 & 2 & 1 & 1 & 0 & 0\\
0 & 1 & 1 & 1 & 2 & 4 & 1 & 0\\
0 & 0 & 1 & 1 & 1 & 2 & 1 & 1\\
1 & 0 & 0 & 1 & 1 & 1 & 2 & 4\\
1 & 1 & 0 & 0 & 1 & 1 & 1 & 2\\
2 & 4 & 1 & 0 & 0 & 1 & 1 & 1
\end{array}
\]
periodically (in all directions) we obtain a wild periodic integral positive $\SL_3$-frieze.
\end{examp}

\subsection{Aperiodic friezes}
But what about non-periodic wild friezes?
The key difference between tame and wild friezes is: If $\Fc$ is a wild \kfrieze, then there exist $k-1$ successive rows which do not uniquely determine the following row by the $\SL_k$-condition. This happens if and only if some adjacent $(k-1)\times(k-1)$ submatrix within these rows has determinant zero.

Define a directed graph $\Gamma_n$ which has the set of $(k-1)$-tuples of vectors in $\NN^n$ as vertices and such that two tuples
$\bar v = (v_1,\ldots,v_{k-1})$ and $\bar w = (w_1,\ldots,w_{k-1})$ are connected by an edge $\bar v \rightarrow \bar w$ if
$(v_2,\ldots,v_{k-1})=(w_1,\ldots,w_{k-2})$ and if $(v_1,\ldots,v_{k-1},w_{k-1})$ satisfy the $\SL_k$-condition when embedded into a frieze.

The following example provides non-periodic friezes with finitely many different entries.

\begin{examp}
In this example, $k=3$. We choose $12$ vertices in the graph $\Gamma_5$:
Let
$$A_{1}:=
\begin{array}{cccccccc}
1 & 1 & 1 & 1 & 1 & 1 & 1 & 0\\
0 & 1 & 2 & 2 & 1 & 1 & 2 & 1
\end{array}
,\quad A_{2}:=
\begin{array}{cccccccc}
1 & 1 & 1 & 1 & 1 & 1 & 1 & 0\\
0 & 1 & 2 & 2 & 1 & 2 & 4 & 1
\end{array}
,$$ $$ A_{3}:=
\begin{array}{cccccccc}
1 & 1 & 1 & 1 & 1 & 2 & 1 & 0\\
0 & 1 & 2 & 2 & 1 & 2 & 2 & 1
\end{array}
,\quad A_{4}:=
\begin{array}{cccccccc}
1 & 1 & 2 & 1 & 1 & 2 & 1 & 0\\
0 & 1 & 4 & 2 & 1 & 2 & 2 & 1
\end{array}
,$$ $$ A_{5}:=
\begin{array}{cccccccc}
1 & 2 & 1 & 1 & 1 & 1 & 1 & 0\\
0 & 1 & 1 & 2 & 1 & 1 & 2 & 1
\end{array}
,\quad A_{6}:=
\begin{array}{cccccccc}
1 & 2 & 1 & 1 & 2 & 1 & 1 & 0\\
0 & 1 & 1 & 1 & 1 & 1 & 2 & 1
\end{array}
,$$ $$A_{7}:=
\begin{array}{cccccccc}
1 & 2 & 1 & 1 & 2 & 2 & 1 & 0\\
0 & 1 & 1 & 1 & 1 & 1 & 1 & 1
\end{array}
,\quad A_{8}:=
\begin{array}{cccccccc}
1 & 2 & 2 & 1 & 1 & 2 & 1 & 0\\
0 & 1 & 2 & 2 & 1 & 2 & 2 & 1
\end{array}
,$$ $$ A_{9}:=
\begin{array}{cccccccc}
1 & 2 & 2 & 1 & 2 & 2 & 1 & 0\\
0 & 1 & 2 & 1 & 1 & 1 & 1 & 1
\end{array}
,\quad A_{10}:=
\begin{array}{cccccccc}
1 & 2 & 2 & 1 & 2 & 2 & 1 & 0\\
0 & 1 & 2 & 1 & 1 & 2 & 2 & 1
\end{array}
,$$ $$ A_{11}:=
\begin{array}{cccccccc}
1 & 2 & 2 & 1 & 2 & 4 & 1 & 0\\
0 & 1 & 2 & 1 & 1 & 2 & 1 & 1
\end{array}
,\quad A_{12}:=
\begin{array}{cccccccc}
1 & 4 & 2 & 1 & 2 & 2 & 1 & 0\\
0 & 1 & 1 & 1 & 1 & 1 & 1 & 1
\end{array}
.$$
Now putting the pieces $A_1,\ldots,A_{12}$ together, where successive pieces overlap with one row and according to the following subgraph of $\Gamma_5$,
\medskip
\begin{center}
\includegraphics[scale=0.5]{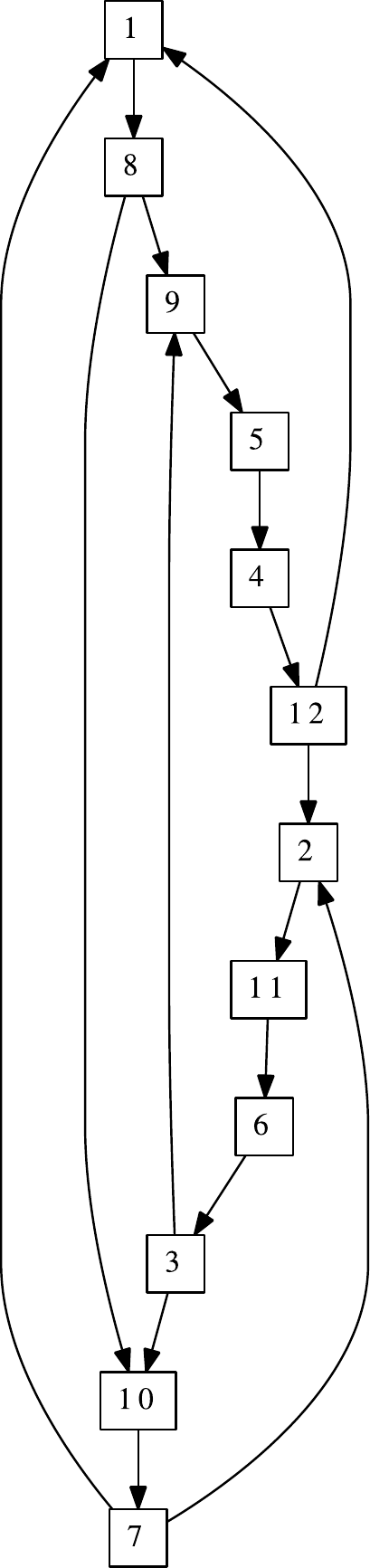}
\end{center}
\medskip
will always yield patterns satisfying the $\SL_3$ condition. In particular, choosing different loops in an aperiodic way will produce an integral positive $\SL_3$-frieze which is not periodic.
\end{examp}

\begin{examp}
Wild friezes really are wild. Figure \ref{fig_wildgraphs} consists of two further examples of subgraphs of $\Gamma_n$. Every closed path corresponds to a periodic integral positive $\SL_3$-frieze. As above, we may construct infinitely many non periodic friezes from these pictures. (Of course, we have omitted a list of the corresponding vertices.)
\begin{figure}
\begin{center}
\begin{tabular}{cc}
\includegraphics[scale=1.2]{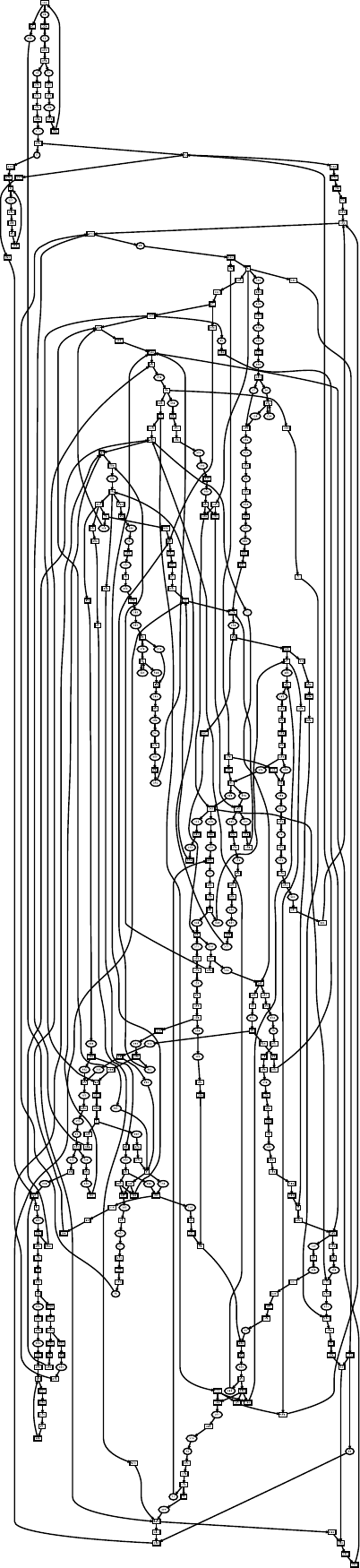}
&
\includegraphics[scale=1.2]{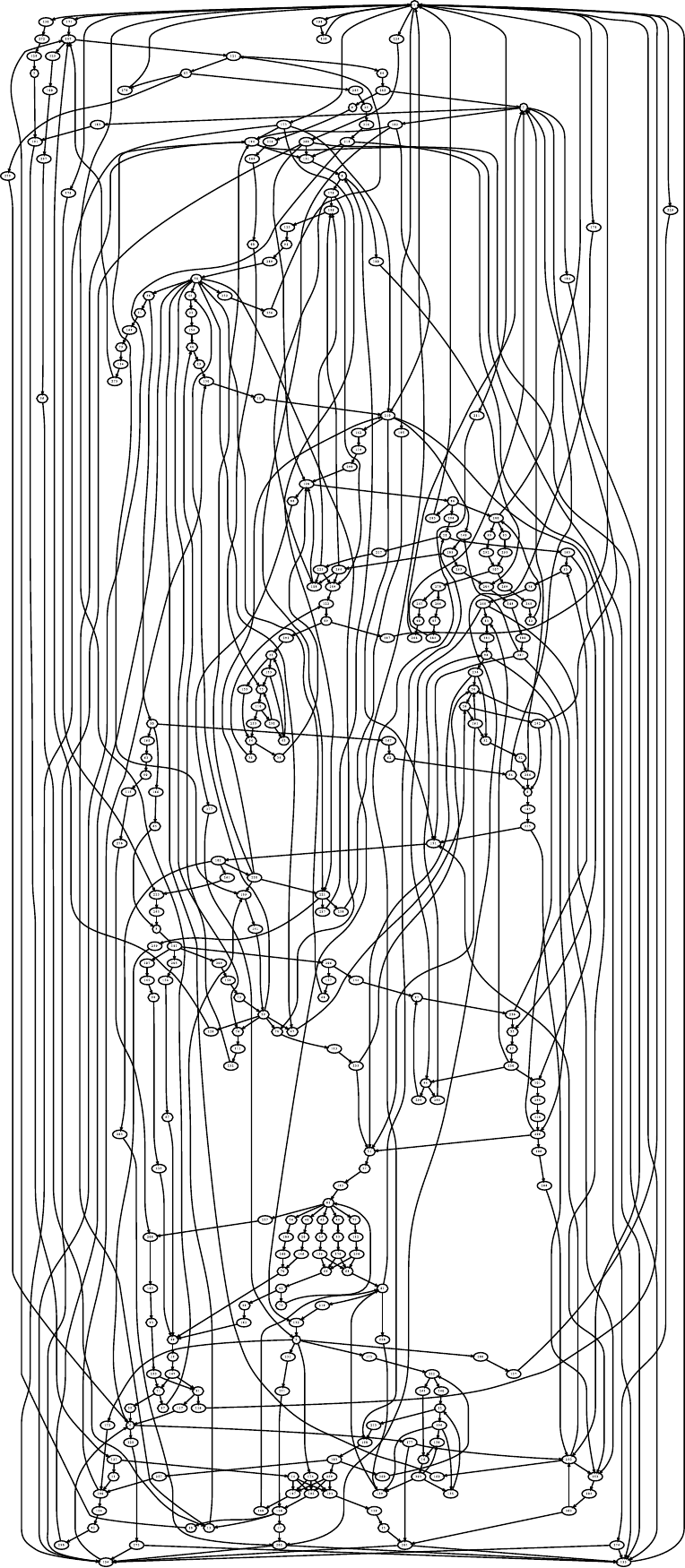}
\end{tabular}
\end{center}
\caption{Examples of graphs producing wild friezes.}\label{fig_wildgraphs}
\end{figure}
\end{examp}
\begin{figure}[htbp]
  \begin{sideways}
    \begin{minipage}{20cm}
\begin{tiny}
\[ \QB_\ell =
\begin{array}{ccccccccccccccccccccc}
1 & 4 & a_{1} & a_{2} & a_{3} & a_{4} & a_{5} & a_{6} & a_{7} & 1 & 0 & 0 & 1 & 4 & a_{1} & a_{2} & a_{3} & a_{4} & a_{5} & a_{6} & a_{7}\\
0 & 1 & a_{8} & a_{9} & a_{10} & a_{11} & a_{12} & a_{13} & a_{14} & 1 & 1 & 0 & 0 & 1 & a_{8} & a_{9} & a_{10} & a_{11} & a_{12} & a_{13} & a_{14}\\
0 & 0 & 1 & a_{15} & a_{16} & 4 & a_{17} & a_{18} & 4 & a_{19} & a_{20} & 1 & 0 & 0 & 1 & a_{15} & a_{16} & 4 & a_{17} & a_{18} & 4\\
1 & 0 & 0 & 1 & 5 & a_{21} & a_{22} & a_{23} & a_{24} & a_{25} & a_{26} & a_{27} & 1 & 0 & 0 & 1 & 5 & a_{21} & a_{22} & a_{23} & a_{24}\\
1 & 1 & 0 & 0 & 1 & a_{28} & a_{29} & a_{30} & a_{31} & a_{32} & a_{33} & a_{18} & 1 & 1 & 0 & 0 & 1 & a_{28} & a_{29} & a_{30} & a_{31}\\
a_{34} & a_{35} & 1 & 0 & 0 & 1 & a_{36} & a_{37} & 4 & a_{38} & a_{39} & 4 & a_{34} & a_{35} & 1 & 0 & 0 & 1 & a_{36} & a_{37} & 4\\
a_{40} & a_{41} & a_{42} & 1 & 0 & 0 & 1 & 2 & a_{43} & a_{44} & a_{45} & a_{46} & a_{40} & a_{41} & a_{42} & 1 & 0 & 0 & 1 & 2 & a_{43}\\
a_{47} & a_{48} & a_{49} & 1 & 1 & 0 & 0 & 1 & a_{50} & a_{51} & a_{52} & a_{53} & a_{47} & a_{48} & a_{49} & 1 & 1 & 0 & 0 & 1 & a_{50}\\
a_{54} & a_{55} & 4 & a_{56} & a_{57} & 1 & 0 & 0 & 1 & a_{58} & a_{1} & 4 & a_{54} & a_{55} & 4 & a_{56} & a_{57} & 1 & 0 & 0 & 1\\
a_{59} & a_{60} & a_{61} & a_{62} & a_{63} & a_{64} & 1 & 0 & 0 & 1 & 2 & a_{65} & a_{59} & a_{60} & a_{61} & a_{62} & a_{63} & a_{64} & 1 & 0 & 0\\
a_{66} & a_{67} & a_{68} & a_{69} & a_{70} & a_{19} & 8 & 1 & 0 & 0 & 1 & a_{71} & a_{66} & a_{67} & a_{68} & a_{69} & a_{70} & a_{19} & 8 & 1 & 0\\
a_{72} & a_{21} & 4 & a_{73} & a_{74} & 4 & a_{75} & a_{76} & 1 & 0 & 0 & 1 & a_{72} & a_{21} & 4 & a_{73} & a_{74} & 4 & a_{75} & a_{76} & 1
\end{array},
\]
\bigskip
\bigskip
\bigskip
\[ \QB_0 =
\begin{array}{ccccccccccccccccccccc}
1 & 4 & 9 & 60 & 160 & 29 & 45 & 18 & 20 & 1 & 0 & 0 & 1 & 4 & 9 & 60 & 160 & 29 & 45 & 18 & 20\\
0 & 1 & 4 & 27 & 72 & 13 & 20 & 8 & 9 & 1 & 1 & 0 & 0 & 1 & 4 & 27 & 72 & 13 & 20 & 8 & 9\\
0 & 0 & 1 & 7 & 19 & 4 & 8 & 3 & 4 & 4 & 7 & 1 & 0 & 0 & 1 & 7 & 19 & 4 & 8 & 3 & 4\\
1 & 0 & 0 & 1 & 5 & 5 & 21 & 7 & 12 & 25 & 45 & 7 & 1 & 0 & 0 & 1 & 5 & 5 & 21 & 7 & 12\\
1 & 1 & 0 & 0 & 1 & 2 & 9 & 3 & 5 & 10 & 18 & 3 & 1 & 1 & 0 & 0 & 1 & 2 & 9 & 3 & 5\\
5 & 8 & 1 & 0 & 0 & 1 & 5 & 2 & 4 & 9 & 16 & 4 & 5 & 8 & 1 & 0 & 0 & 1 & 5 & 2 & 4\\
21 & 35 & 5 & 1 & 0 & 0 & 1 & 2 & 7 & 20 & 35 & 12 & 21 & 35 & 5 & 1 & 0 & 0 & 1 & 2 & 7\\
12 & 20 & 3 & 1 & 1 & 0 & 0 & 1 & 4 & 12 & 21 & 7 & 12 & 20 & 3 & 1 & 1 & 0 & 0 & 1 & 4\\
8 & 13 & 4 & 7 & 16 & 1 & 0 & 0 & 1 & 5 & 9 & 4 & 8 & 13 & 4 & 7 & 16 & 1 & 0 & 0 & 1\\
5 & 8 & 3 & 6 & 14 & 1 & 1 & 0 & 0 & 1 & 2 & 2 & 5 & 8 & 3 & 6 & 14 & 1 & 1 & 0 & 0\\
20 & 32 & 11 & 21 & 49 & 4 & 8 & 1 & 0 & 0 & 1 & 7 & 20 & 32 & 11 & 21 & 49 & 4 & 8 & 1 & 0\\
3 & 5 & 4 & 10 & 23 & 4 & 23 & 5 & 1 & 0 & 0 & 1 & 3 & 5 & 4 & 10 & 23 & 4 & 23 & 5 & 1
\end{array},
\]
\bigskip
\bigskip
\[
\QB_1 =
\begin{array}{ccccccccccccccccccccc}
1 & 4 & 29 & 84 & 192 & 41 & 261 & 58 & 12 & 1 & 0 & 0 & 1 & 4 & 29 & 84 & 192 & 41 & 261 & 58 & 12\\
0 & 1 & 12 & 35 & 80 & 17 & 108 & 24 & 5 & 1 & 1 & 0 & 0 & 1 & 12 & 35 & 80 & 17 & 108 & 24 & 5\\
0 & 0 & 1 & 3 & 7 & 4 & 32 & 7 & 4 & 44 & 75 & 1 & 0 & 0 & 1 & 3 & 7 & 4 & 32 & 7 & 4\\
1 & 0 & 0 & 1 & 5 & 49 & 437 & 95 & 68 & 833 & 1421 & 19 & 1 & 0 & 0 & 1 & 5 & 49 & 437 & 95 & 68\\
1 & 1 & 0 & 0 & 1 & 18 & 161 & 35 & 25 & 306 & 522 & 7 & 1 & 1 & 0 & 0 & 1 & 18 & 161 & 35 & 25\\
81 & 128 & 1 & 0 & 0 & 1 & 9 & 2 & 4 & 61 & 104 & 4 & 81 & 128 & 1 & 0 & 0 & 1 & 9 & 2 & 4\\
3317 & 5243 & 41 & 1 & 0 & 0 & 1 & 2 & 107 & 1804 & 3075 & 148 & 3317 & 5243 & 41 & 1 & 0 & 0 & 1 & 2 & 107\\
1860 & 2940 & 23 & 1 & 1 & 0 & 0 & 1 & 60 & 1012 & 1725 & 83 & 1860 & 2940 & 23 & 1 & 1 & 0 & 0 & 1 & 60\\
112 & 177 & 4 & 123 & 280 & 1 & 0 & 0 & 1 & 17 & 29 & 4 & 112 & 177 & 4 & 123 & 280 & 1 & 0 & 0 & 1\\
1053 & 1664 & 47 & 1598 & 3638 & 13 & 1 & 0 & 0 & 1 & 2 & 34 & 1053 & 1664 & 47 & 1598 & 3638 & 13 & 1 & 0 & 0\\
3564 & 5632 & 159 & 5405 & 12305 & 44 & 8 & 1 & 0 & 0 & 1 & 115 & 3564 & 5632 & 159 & 5405 & 12305 & 44 & 8 & 1 & 0\\
31 & 49 & 4 & 170 & 387 & 4 & 411 & 89 & 1 & 0 & 0 & 1 & 31 & 49 & 4 & 170 & 387 & 4 & 411 & 89 & 1
\end{array}
.\]
\end{tiny}
    \end{minipage}
  \end{sideways}
  \centering
  \caption{The segment $\QB_\ell$ of the pattern and as examples $\QB_0$ and $\QB_1$.}
  \label{Q0Q1}
\end{figure}

\subsection{Unbounded entries}
We now display a $\SL_3$-frieze which is integral, positive, not periodic, and with unbounded entries.
Let $w=9+\sqrt{80}$, and denote by $\QB_\ell$ the array displayed in Figure \ref{Q0Q1}
where the entries $a_1(\ell),\ldots,a_{76}(\ell)$ are given in Figure \ref{entri}.
\begin{figure}
\begin{tiny}
\begin{multicols}{2}
\begin{spacing}{0.5}
\begin{align*}
a_{1}&=1/40(13w + 63)w^{-\ell}+1/40(-13w + 297)w^\ell\\
a_{2}&=1/40(113w + 3)w^{-2 \ell}+9+1/40(-113w + 2037)w^{2 \ell}\\
a_{3}&=1/10(77w + 1)w^{-2 \ell}+106/5+1/10(-77w + 1387)w^{2 \ell}\\
a_{4}&=1/8(11w + 17)w^{-\ell}+1/8(-11w + 215)w^\ell\\
a_{5}&=1/40(73w + 27)w^{-2 \ell}+54/5+1/40(-73w + 1341)w^{2 \ell}
\end{align*}
\begin{align*}
a_{6}&=1/40(31w + 5)w^{-2 \ell}+19/5+1/40(-31w + 563)w^{2 \ell}\\
a_{7}&=1/20(21w + 11)w^{-\ell}+1/20(-21w + 389)w^\ell\\
a_{8}&=1/20(3w + 13)w^{-\ell}+1/20(-3w + 67)w^\ell\\
a_{9}&=1/40(51w + 1)w^{-2 \ell}+4+1/40(-51w + 919)w^{2 \ell}\\
a_{10}&=1/40(139w + 1)w^{-2 \ell}+47/5+1/40(-139w + 2503)w^{2 \ell}
\end{align*}
\begin{align*}
a_{11}&=1/8(5w + 7)w^{-\ell}+1/8(-5w + 97)w^\ell\\
a_{12}&=1/40(33w + 11)w^{-2 \ell}+23/5+1/40(-33w + 605)w^{2 \ell}\\
a_{13}&=1/20(7w + 1)w^{-2 \ell}+8/5+1/20(-7w + 127)w^{2 \ell}\\
a_{14}&=1/40(19w + 9)w^{-\ell}+1/40(-19w + 351)w^\ell\\
a_{15}&=1/8(3w + 1)w^{-\ell}+1/8(-3w + 55)w^\ell
\end{align*}
\begin{align*}
a_{16}&=1/40(41w + 11)w^{-\ell}+1/40(-41w + 749)w^\ell\\
a_{17}&=1/4(w + 7)w^{-\ell}+1/4(-w + 25)w^\ell\\
a_{18}&=1/8(w + 3)w^{-\ell}+1/8(-w + 21)w^\ell\\
a_{19}&=1/20(-w + 49)w^{-\ell}+1/20(w + 31)w^\ell\\
a_{20}&=1/40(-3w + 167)w^{-\ell}+1/40(3w + 113)w^\ell
\end{align*}
\begin{align*}
a_{21}&=1/40(-w + 109)w^{-\ell}+1/40(w + 91)w^\ell\\
a_{22}&=1/10(5w + 13)w^{-2 \ell}+47/5+1/10(-5w + 103)w^{2 \ell}\\
a_{23}&=1/40(9w + 11)w^{-2 \ell}+12/5+1/40(-9w + 173)w^{2 \ell}\\
a_{24}&=1/4(w + 15)w^{-\ell}+1/4(-w + 33)w^\ell\\
a_{25}&=1/40(15w + 101)w^{-2 \ell}+66/5+1/40(-15w + 371)w^{2 \ell}
\end{align*}
\begin{align*}
a_{26}&=1/10(7w + 43)w^{-2 \ell}+119/5+1/10(-7w + 169)w^{2 \ell}\\
a_{27}&=1/40(11w + 41)w^{-\ell}+1/40(-11w + 239)w^\ell\\
a_{28}&=w^{-\ell}+w^\ell\\
a_{29}&=1/40(9w + 19)w^{-2 \ell}+4+1/40(-9w + 181)w^{2 \ell}\\
a_{30}&=1/10(w + 1)w^{-2 \ell}++1/10(-w + 19)w^{2 \ell}
\end{align*}
\begin{align*}
a_{31}&=1/8(w + 11)w^{-\ell}+1/8(-w + 29)w^\ell\\
a_{32}&=1/40(7w + 37)w^{-2 \ell}+5+1/40(-7w + 163)w^{2 \ell}\\
a_{33}&=1/40(13w + 63)w^{-2 \ell}+9+1/40(-13w + 297)w^{2 \ell}\\
a_{34}&=1/40(-9w + 181)w^{-\ell}+1/40(9w + 19)w^\ell\\
a_{35}&=1/20(-7w + 143)w^{-\ell}+1/20(7w + 17)w^\ell
\end{align*}
\begin{align*}
a_{36}&=1/40(9w + 19)w^{-\ell}+1/40(-9w + 181)w^\ell\\
a_{37}&=1/10(w + 1)w^{-\ell}+1/10(-w + 19)w^\ell\\
a_{38}&=1/8(w + 27)w^{-\ell}+1/8(-w + 45)w^\ell\\
a_{39}&=1/4(w + 23)w^{-\ell}+1/4(-w + 41)w^\ell\\
a_{40}&=1/10(-5w + 103)w^{-2 \ell}+47/5+1/10(5w + 13)w^{2 \ell}
\end{align*}
\begin{align*}
a_{41}&=1/40(-31w + 651)w^{-2 \ell}+82/5+1/40(31w + 93)w^{2 \ell}\\
a_{42}&=1/40(w + 91)w^{-\ell}+1/40(-w + 109)w^\ell\\
a_{43}&=1/40(-11w + 239)w^{-\ell}+1/40(11w + 41)w^\ell\\
a_{44}&=1/40(-3w + 223)w^{-2 \ell}+51/5+1/40(3w + 169)w^{2 \ell}\\
a_{45}&=1/10(-w + 95)w^{-2 \ell}+89/5+1/10(w + 77)w^{2 \ell}
\end{align*}
\begin{align*}
a_{46}&=1/4(-w + 33)w^{-\ell}+1/4(w + 15)w^\ell\\
a_{47}&=1/40(-11w + 231)w^{-2 \ell}+27/5+1/40(11w + 33)w^{2 \ell}\\
a_{48}&=1/40(-17w + 365)w^{-2 \ell}+47/5+1/40(17w + 59)w^{2 \ell}\\
a_{49}&=1/40(w + 51)w^{-\ell}+1/40(-w + 69)w^\ell\\
a_{50}&=1/20(-3w + 67)w^{-\ell}+1/20(3w + 13)w^\ell
\end{align*}
\begin{align*}
a_{51}&=1/40(-w + 125)w^{-2 \ell}+31/5+1/40(w + 107)w^{2 \ell}\\
a_{52}&=1/40(-w + 213)w^{-2 \ell}+54/5+1/40(w + 195)w^{2 \ell}\\
a_{53}&=1/8(-w + 37)w^{-\ell}+1/8(w + 19)w^\ell\\
a_{54}&=1/4(-w + 25)w^{-\ell}+1/4(w + 7)w^\ell\\
a_{55}&=1/8(-3w + 79)w^{-\ell}+1/8(3w + 25)w^\ell
\end{align*}
\begin{align*}
a_{56}&=1/8(-3w + 55)w^{-\ell}+1/8(3w + 1)w^\ell\\
a_{57}&=1/20(-17w + 313)w^{-\ell}+1/20(17w + 7)w^\ell\\
a_{58}&=1/40(7w + 37)w^{-\ell}+1/40(-7w + 163)w^\ell\\
a_{59}&=1/40(-7w + 131)w^{-2 \ell}+8/5+1/40(7w + 5)w^{2 \ell}\\
a_{60}&=1/40(-11w + 207)w^{-2 \ell}+13/5+1/40(11w + 9)w^{2 \ell}
\end{align*}
\begin{align*}
a_{61}&=1/8(-w + 21)w^{-\ell}+1/8(w + 3)w^\ell\\
a_{62}&=1/40(-11w + 199)w^{-2 \ell}++1/40(11w + 1)w^{2 \ell}\\
a_{63}&=1/40(-25w + 453)w^{-2 \ell}+13/5+1/40(25w + 3)w^{2 \ell}\\
a_{64}&=1/40(-w + 29)w^{-\ell}+1/40(w + 11)w^\ell\\
a_{65}&=1/10(-w + 19)w^{-\ell}+1/10(w + 1)w^\ell
\end{align*}
\begin{align*}
a_{66}&=1/40(-23w + 443)w^{-2 \ell}+41/5+1/40(23w + 29)w^{2 \ell}\\
a_{67}&=1/10(-9w + 175)w^{-2 \ell}+66/5+1/10(9w + 13)w^{2 \ell}\\
a_{68}&=1/8(-3w + 71)w^{-\ell}+1/8(3w + 17)w^\ell\\
a_{69}&=1/40(-37w + 673)w^{-2 \ell}+4+1/40(37w + 7)w^{2 \ell}\\
a_{70}&=1/10(-21w + 383)w^{-2 \ell}+51/5+1/10(21w + 5)w^{2 \ell}
\end{align*}
\begin{align*}
a_{71}&=1/40(-13w + 257)w^{-\ell}+1/40(13w + 23)w^\ell\\
a_{72}&=1/40(-w + 69)w^{-\ell}+1/40(w + 51)w^\ell\\
a_{73}&=1/2(-w + 19)w^{-\ell}+1/2(w + 1)w^\ell\\
a_{74}&=1/8(-9w + 173)w^{-\ell}+1/8(9w + 11)w^\ell\\
a_{75}&=1/40(-51w + 919)w^{-\ell}+1/40(51w + 1)w^\ell\\
a_{76}&=1/40(-11w + 199)w^{-\ell}+1/40(11w + 1)w^\ell
\end{align*}
\end{spacing}
\end{multicols}
\end{tiny}
  \caption{The entries $a_1(\ell),\ldots,a_{76}(\ell)$.}
  \label{entri}
\end{figure}
See Figure \ref{Q0Q1} for examples.
One can check that concatenating $$\ldots,\QB_{-2},\QB_{-1},\QB_0,\QB_1,\QB_2,\ldots$$ produces an integral, positive $\SL_3$-frieze in which the entries increase without limit.
The integrality comes from the fact that the above formulas are expressions for integral recursions. For example,
\[ t_\ell : =a_{28} = w^{-\ell}+w^\ell, \]
satisfies
\[ t_\ell = 18 t_{\ell-1} - t_{\ell-2}, \quad t_1=18, \quad t_0=2. \]

\begin{remar}
We computed this example with the following technique.
Start with an arbitrary vertex in $\Gamma_n$ and successively construct adjacent vertices (with entries within a certain bound).
This will produce a graph with loops and many branches. Some of them turn out to be leaves or ``dead ends'' (at least we find no continuation within the bounds).
Now, after removing some dead ends, we look for a longest path in our constructed subgraph. Then we look for a pattern among the vertices on the path.
\end{remar}


\def\cprime{$'$}
\providecommand{\bysame}{\leavevmode\hbox to3em{\hrulefill}\thinspace}
\providecommand{\MR}{\relax\ifhmode\unskip\space\fi MR }
\providecommand{\MRhref}[2]{%
  \href{http://www.ams.org/mathscinet-getitem?mr=#1}{#2}
}
\providecommand{\href}[2]{#2}

\end{document}